\newtheorem*{theorem*}{Theorem}
\newcommand{\ca}{\mathcal A}
\newcommand{\ch}{\mathcal H}
\newcommand{\cl}{\mathcal L}
\newcommand{\tp}{\tilde{P}}
\def\be{\begin{equation}}
\def\ee{\end{equation}}
\def\ba{\begin{align}}
\def\ea{\end{align}}
\newcommand{\id}{\mathbb{1}}
\begin{document}

\title{A self-contained proof of the Artin-Wedderburn theorem in the case of finite-dimensional Von Neumann algebras}

\author{Octave Mestoudjian}

\affiliation{Université Paris-Saclay, Inria, CNRS, LMF, 91190 Gif-sur-Yvette, France}

\author{Pablo Arrighi}

\affiliation{Université Paris-Saclay, Inria, CNRS, LMF, 91190 Gif-sur-Yvette, France}

\begin{abstract}
We provide a self-contained proof of the Artin-Wedderburn theorem in the case of finite-dimensional Von Neumann algebras (or equivalently unital C* algebras) that is fully constructive and uses only basic notions of linear algebra.
\end{abstract}

\maketitle

The Artin-Wedderburn theorem, originally stated in \cite{artin} and \cite{wedderburn}, is a fundamental theorem on the strucutre of (semi-simple) rings. As a special case, it provides a result on the decomposition of finite-dimensional Von Neumann algebras (or equivalently unital C* algebras), as stated for example in Theorem 11.2 of \cite{takesaki}. Some self-contained proofs have been given for the general result in \cite{brevsar2010}, \cite{brevsar2024} and \cite{gao}. Following similar ideas, we provide a proof in the special case of Von Neumann algebras that has the advantage of being fully constructive and to use only basic notions of linear algebra.

\begin{theorem*}[Artin-Wedderburn]
Let $\ch$ be a finite-dimensional Hilbert space and $\ca \subseteq \cl(\ch)$ be a Von Neumann algebra. Then there exists families of Hilbert spaces $(\ch_L^{k})$ and $(\ch_R^{k})$ and a unitary $U: \ch \rightarrow \bigoplus_k (\ch_L^{k} \otimes \ch_R^{k})$ such that 
\begin{equation}
U \ca U^{\dagger} = \bigoplus_k \cl(\ch_L^{k}) \otimes \id_{\ch_R^{k}}.
\end{equation}
\end{theorem*}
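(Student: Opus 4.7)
The plan is to reduce the theorem to the case where $\ca$ is a factor by splitting $\ch$ along the minimal central projections of $\ca$, and then to handle each factor by explicitly constructing a system of matrix units. First I would consider the centre $\cz(\ca) := \ca \cap \ca'$, a finite-dimensional commutative unital *-subalgebra of $\cl(\ch)$. Joint diagonalisation of its commuting self-adjoint elements produces a family of pairwise orthogonal minimal projections $p_1,\dots,p_K$ of $\cz(\ca)$ with $\sum_k p_k = \id$. Setting $\ch_k := p_k\ch$ and $\ca_k := \ca p_k \subseteq \cl(\ch_k)$, the algebra splits as $\ca = \bigoplus_k \ca_k$, and the minimality of each $p_k$ in $\cz(\ca)$ forces $\ca_k$ to have trivial centre in $\cl(\ch_k)$, i.e.\ to be a factor. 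It therefore suffices to produce, for a factor $\cb \subseteq \cl(\ck)$ on a finite-dimensional $\ck$, a unitary $V : \ck \to \ck_L \otimes \ck_R$ with $V \cb V^\dagger = \cl(\ck_L) \otimes \id_{\ck_R}$, and to assemble the global unitary as the direct sum.

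The factor case rests on a Schur-type lemma. I would pick a nonzero projection $e \in \cb$ of smallest rank, which exists because the spectral theorem applied to $\id \in \cb$ yields nonzero projections in $\cb$ and the set of ranks attained is a nonempty subset of $\NN$. The claim is that $e\cb e = \CC e$: any self-adjoint $x \in e\cb e \subseteq \cb$ decomposes spectrally into mutually orthogonal projections that lie in $\cb$ and are dominated by $e$, so by minimality each such projection is $0$ or $e$, forcing $x \in \RR e$; writing a general $a \in \cb$ as $h_1 + i h_2$ with $h_1, h_2$ self-adjoint extends this to $e\cb e = \CC e$. The same argument shows $f \cb f = \CC f$ for any minimal projection $f$ of $\cb$.

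Next I would construct a system of matrix units. Given two minimal projections $e, f \in \cb$, I claim $e\cb f \neq 0$: otherwise $\cb f \cb$ would be a proper two-sided *-ideal of $\cb$ whose unit (which exists in finite dimension and is a projection) would commute with every element of $\cb$ and hence lie in $\cz(\cb)\setminus\{0,\id\}$, contradicting the factor hypothesis. Picking nonzero $w \in e\cb f$, the Schur lemma gives $w^\dagger w = \lambda f$ with $\lambda > 0$, so $v := w/\sqrt{\lambda} \in \cb$ satisfies $v^\dagger v = f$; combining $vv^\dagger \in e\cb e = \CC e$ with the trace identity $\operatorname{tr}(vv^\dagger) = \operatorname{tr}(v^\dagger v)$ gives $vv^\dagger = e$, so all minimal projections of $\cb$ have the same rank and are Murray--von-Neumann equivalent within $\cb$. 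Starting from $e_{11} := e$, I would iterate: so long as $r := \id - \sum_{i=1}^{m} e_{ii}$ is nonzero, pick a minimal projection of $\cb$ below $r$ by taking a smallest-rank projection of $r\cb r$, and use the construction above to obtain $e_{m+1,1} \in \cb$ with $e_{m+1,1}^\dagger e_{m+1,1} = e_{11}$ and $e_{m+1,1}e_{m+1,1}^\dagger = e_{m+1,m+1}$. In finite dimension the process terminates at some $n$ with $\sum_{i=1}^n e_{ii} = \id$. Setting $e_{ij} := e_{i1}e_{j1}^\dagger$ yields the matrix-unit relations $e_{ij}e_{kl} = \delta_{jk} e_{il}$, and for any $a \in \cb$ the Schur lemma applied to $e_{1i} a e_{j1} \in e_{11}\cb e_{11}$ produces scalars $c_{ij}$ with $a = \sum_{i,j} c_{ij} e_{ij}$, so $\cb = \Span\{e_{ij}\} \cong M_n(\CC)$.

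Finally I would set $\ck_R := e_{11}\ck$, $\ck_L := \CC^n$ with basis $(\ket{i})_{i=1}^n$, and define $V : \ck \to \ck_L \otimes \ck_R$ by $V\xi := \sum_i \ket{i} \otimes e_{1i}\xi$. A direct computation from the matrix-unit relations gives $V^\dagger V = \id_\ck$, $VV^\dagger = \id_{\ck_L \otimes \ck_R}$ and $V e_{ij} V^\dagger = \ket{i}\bra{j} \otimes \id_{\ck_R}$, hence $V \cb V^\dagger = \cl(\ck_L) \otimes \id_{\ck_R}$, and assembly over the central projections of the first paragraph finishes the proof. The main obstacle is the matrix-unit construction, and specifically the nonvanishing of $e\cb f$ for two minimal projections: the clean argument requires the auxiliary fact that every proper two-sided *-ideal of a finite-dimensional von Neumann algebra admits a unit that is central, which must be established without circular use of the theorem being proved. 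A secondary concern is ensuring that at each iteration the residual $r$ truly contains a minimal projection of $\cb$ equivalent to $e_{11}$, which is resolved by the rank invariance of minimal projections noted above.
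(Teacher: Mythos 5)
Your proposal is correct in substance and reaches the stated conclusion, but it takes a genuinely different route from the paper. You first split $\ch$ along the minimal projections of the centre $\cz(\ca)=\ca\cap\ca'$ and reduce to the case of a factor, where the non-vanishing of $e\cb f$ for minimal projections $e,f$ is deduced from the absence of proper two-sided *-ideals; you then build a full system of matrix units $e_{ij}$ and conjugate by $V\xi=\sum_i\ket{i}\otimes e_{1i}\xi$. The paper never mentions the commutant, the centre, factors or ideals: it starts from a \emph{maximal} family of pairwise orthogonal projectors $P_i\in\ca$ summing to $\id$ (maximality plays the role of your smallest-rank argument in establishing $P_j\ca P_j=\CC P_j$), and it obtains the block decomposition by introducing the equivalence relation $i\sim j\Leftrightarrow P_i\ca P_j\neq 0$, whose transitivity is a short computation using $P_j\ca P_j=\CC P_j$. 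Within a block the non-vanishing $P_i\ca P_j\neq 0$ then holds \emph{by definition} of the equivalence class, so the paper entirely sidesteps the auxiliary lemma you correctly flag as your main obstacle (that a nonzero proper two-sided *-ideal of a finite-dimensional von Neumann algebra has a unit which is a nontrivial central projection). That lemma is true and provable without circularity --- for instance, the support projection of $\sum_k a_k a_k^\dagger$, where $(a_k)$ is a basis of the ideal, is a polynomial with zero constant term in that element and acts as a unit, and a unit of a two-sided ideal is automatically central --- so your argument closes once this is written out; your secondary concern about the iteration is indeed resolved by the equal-rank and equivalence statements you prove. In short, the paper's equivalence-relation device buys a shorter, more self-contained path using only linear algebra, while your route buys the standard structural picture: the blocks are identified with the central summands, and each factor is exhibited explicitly as $M_n(\CC)$ via matrix units.
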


\begin{proof}
Let $P=\{P_i\}_{i \in \{1, \ldots, p \}}$ be a family of non-zero, orthogonal, and pairwise orthogonal projectors of $\mathcal{A}$, ie. such that: 

\begin{enumerate}[label = (\roman*)]
\item $\forall i \in \{1, \ldots, p \}, \; 0 \neq P_i = P_i^{\dagger} \in\mathcal{A}$; 
\item $\forall i,j\in \{1, \ldots, p \} \; P_iP_j=\delta_{ij} P_i \in \mathcal{A}$. 
\end{enumerate}
Notice that such a family always exists as $\{ \id_{\ch} \} \subseteq \mathcal{A}$ is one. Moreover we will take $P$ to be maximal among such families, i.e. so that there is no family $Q=\{Q_j\}$ verifying conditions $(i),(ii)$ and such that $\mathcal{P}\subsetneq \mathcal{Q}$, with $\mathcal{P},\mathcal{Q}$ the algebras spanned by $P$ and $Q$. It follows that
\begin{enumerate}[label = (\roman*)]
\item[(iii)] $\sum_{i \in \{1, \ldots, p \}} P_i = \id$,
\end{enumerate}
otherwise $\id - \sum_{i \in \{1, \ldots, p \}} P_i$ could be added to the set of projectors which would contradict the maximality of $P$.\vspace{1mm}\\

\newpage

\textsc{First}, we show that
\begin{align}
\forall j, \quad P_j\mathcal{A}P_j=\mathbb{C}P_j.\label{diagcase}
\end{align}
Intuitively, this is because the contrary would allow us to refine the subspaces defined by $P_j$ into
smaller subspaces, and hence go against the fact that $P$ is maximal. Remark that we always have $\mathbb{C}P_j \subseteq P_j\mathcal{A}P_j$, because $P_j P_j P_j = P_j \neq 0$ and that the reverse inclusion is immediately true if $P_j$ is of rank one. Let us now assume that $P_j$ is of rank strictly greater than one, and suppose that there exists some (necessarily non-zero) $M\in\mathcal{A}$ such that $P_j M P_j \not\propto P_j$. Both $P_j(M + M^{\dagger})P_j$ and $iP_j(M-M^{\dagger})P_j$ are hermitian, and at least one of them is not a multiple of $P_j$ (otherwise $P_j M P_j$ would also be one); we will call it $H$. Remark that $H$ has support in the subspace $Im(P_j)$, and hence that it can be decomposed as $H=\sum_k \lambda_k Q_k$ where the $Q_k$'s are orthogonal and pairwise orthogonal projectors, such that $\sum_k Q_k = P_j$, and the $\lambda_k$'s are distinct real numbers. This sum has at least two terms as $P_j$ is of rank strictly greater than one and $H \not\propto P_j$. All the $Q_k$'s are in $\mathcal{A}$, as for any $k$ such that $\lambda_k \neq 0$, $Q_k=\frac{1}{\lambda_k} H\prod_{l\neq k} (H-\lambda_l P_j) \in \ca$ (and the one such that $\lambda_k = 0$ can be obtained by substracting from $P_j$ all the other $Q_k$). Consider now the set $Q=P/\{P_j\} \cup (\bigcup_k \{Q_k\})$. It satisfies conditions $(i)$ and $(ii)$, is different from $P$ and such that $\mathcal{P}\subsetneq \mathcal{Q}$, which contradicts the maximality of $P$ and concludes the proof of this first property. \vspace{1mm}\\

\textsc{Second}, we can define the following equivalence relation on (the indices of) the projectors: 
\begin{align}
i \sim j \Leftrightarrow P_i\mathcal{A}P_j \neq 0.
\end{align}
 It is indeed an equivalence relation, as it is: 
\begin{itemize}
\item reflexive, because $P_i P_i P_i = P_i \neq 0$;
\item transitive, indeed let $A,B \in \mathcal{A}$ such that $M = P_i A P_j \neq 0$ and $N = P_j B P_k \neq 0$. By Eq. (\ref{diagcase}), we know that there exists $\lambda,\mu \in \mathbb{C}$ such that $M^{\dagger}M = P_j A^{\dagger} P_i A P_j = \lambda P_j$ and $NN^{\dagger} = P_j B P_k B^{\dagger} P_j = \mu P_j$. Moreover, as $M \neq 0 \Leftrightarrow MM^{\dagger}  \neq 0$ and $N \neq 0 \Leftrightarrow NN^{\dagger} \neq 0$ it follows that $\lambda,\mu \neq 0$. Finally $P_j A^{\dagger} P_i A P_j B P_k B^{\dagger} P_j = \lambda \mu P_j \neq 0 \Rightarrow P_i A P_j B P_k \neq 0$;
\item symmetric, since
\begin{align*} 
P_i\mathcal{A}P_j\neq 0 & \Leftrightarrow (P_i\mathcal{A}P_j)^{\dagger}\neq 0 \\
& \Leftrightarrow P_j^{\dagger}\mathcal{A}^{\dagger}P_i^{\dagger} \neq 0\\
& \Leftrightarrow P_j\mathcal{A}P_i \neq 0.
\end{align*}
\end{itemize}
Let $I_1,...,I_n$ be the different equivalence classes and let us denote $P_{I_k} = \sum_{i \in I_k} P_i$. It follows, by definition of $\sim$, that 
\begin{align*}
\mathcal{A} = (\sum_{k = 1}^n P_{I_k}) \mathcal{A}  (\sum_{l = 1}^n P_{I_l}) = \sum_{k = 1}^n P_{I_k} \mathcal{A} P_{I_k}.
\end{align*}
This sum is actually direct as, for any $P_{I_k}A_kP_{I_k} = P_{I_l}A_lP_{I_l} = A \in P_{I_k} \mathcal{A}  P_{I_k} \cap P_{I_l} \mathcal{A}  P_{I_l}$, we have that $A = P_{I_k}A_kP_{I_k} = P_{I_k}P_{I_k}A_kP_{I_k} = P_{I_k}P_{I_l}A_lP_{I_l} = \delta_{kl} A$. Moreover, each $ P_{I_k} \mathcal{A}  P_{I_k}$ is (unitarily isomorphic to) a Von Neumann subalgebra of $\mathcal{L}(\mathcal{H}_{I_k})$ and the $\{P_i\}_{i \in {I_k}}$ form a maximal family of orthogonal and pairwise orthogonal projectors of this algebra. We thus obtain that
\begin{align*}
\mathcal{H} &= \bigoplus_k \mathcal{H}_{I_k}, \\
\ca & = \bigoplus_k \ca_{I_k},
\end{align*}
where $\ch_{I_k} = Im(P_{I_k})$ and $\ca_{I_k} = P_i \ca P_i$. \vspace{1mm}\\

In the rest of the proof we place ourselves in one of these algebras $\ca_{I_k}$, or equivalently one of the classes $I_k$, where any two projectors are such that 
\begin{align}\label{neq0}
P_i \mathcal{A} P_j \neq 0,
\end{align}
and will prove that $\mathcal{A}_{I_k}$ is unitarily isomorphic to some $\cl(\ch_L^{k}) \otimes \id_{\ch_R^{k}}$. As $\ca = \bigoplus_k \ca_{I_k}$, this will prove the result of the theorem. \vspace{1mm}\\

\textsc{Third}, we show that for all $A \in \ca$ (which we remind is now one of the $\ca_{I_k}$) and for all $i,j \in \{1\ldots p\}$, 
if we let $M=P_i A P_j$, then
\begin{align}
\exists \lambda\in\mathbb{C},\quad M^\dagger M=\lambda P_i\,\wedge\, M M^\dagger=\lambda P_j . \label{sympropto}
\end{align}
Indeed, by Eq. (\ref{diagcase}), we have that there exists $\lambda,\mu \in \mathbb{C}$ such that $M^\dagger M=\lambda P_i$ and $M M^\dagger=\mu P_j$. But then $\lambda^2 P_i = M^\dagger M M^\dagger M = \mu M^\dagger P_j M=  \mu\lambda P_i$, 
hence $\lambda$ equals $\mu$.\vspace{1mm}\\

\textsc{Fourth}, we show that 
\begin{align}
\forall i,j, \quad \trace(P_i)=\trace(P_j)=\textrm{some constant }q.\label{qvalue}
\end{align}
For each $i,j$ take some $A\in\mathcal{A}$ verifying Eq. (\ref{neq0}) and let $M=P_i A P_j$. By Eq. (\ref{sympropto}) there exists a complex number $\lambda$
such that $P_i=\lambda MM^{\dagger}$ and $P_j=\lambda M^{\dagger}M$.
Then the equality follows from  
$\trace(\lambda MM^{\dagger})=\trace(\lambda M^{\dagger}M) = q$ and we conclude that all the $P_i$ have the same rank.\vspace{1mm}\\

\textsc{Fifth}, consider some Hilbert spaces, $\ch_L $ of dimension the number $p$ of projectors in the family $P$, and $\ch_R$ of dimension $q$ the rank of these projectors. Define the unitary $V : \ch \rightarrow \ch_L \otimes \ch_R$ which takes each $P_i$ to $ \tilde{P}_i =\ketbra{i}{i} \otimes \id_{\ch_R} \in \cl(\ch_L \otimes \ch_R)$.
Note that this is always possible since the $\{P_i\}_{i \in \{1,\ldots, p\} }$ form an 
orthogonal $(ii)$, complete $(iii)$ set of projectors of equal rank by Eq. (\ref{qvalue}). 
We now show that 
\begin{align}
\forall B \in V\mathcal{A}V^{\dagger}, \, &\forall i,j, \quad  \tilde{P}_i B \tilde{P}_j =\ketbra{i}{j}\otimes B_{ij} \nonumber\\
&\textrm{ where }B_{ij}B_{ij}^{\dagger}= B_{ij}^{\dagger}B_{ij}\propto \id_{\ch_R}.\label{localunitary}
\end{align}
Indeed, the first line is true because
\begin{align*}
\tilde{P}_i B \tilde{P}_j &=\sum_{k,l,m,n} \alpha_{klmn} (\ketbra{i}{i}\otimes \id_{\ch_R})(\ketbra{k}{l}\otimes\ketbra{m}{n}) (\ketbra{j}{j}\otimes \id_{\ch_R})\\
&= \sum_{m,n} \alpha_{ijmn} (\ketbra{i}{j}\otimes\ketbra{m}{n}) = \ketbra{i}{j}\otimes (\sum_{mn} \alpha_{ijmn} \ketbra{m}{n}). 
\end{align*}
Now if we let $M= \tilde{P}_i B \tilde{P}_j$, by Eq. (\ref{sympropto}) there exists a complex number $\lambda$
such that we have both $\tilde{P}_i = \lambda MM^{\dagger}$ and $\tilde{P}_j = \lambda M^{\dagger}M$. It follows that 
\begin{align*}
\ketbra{i}{i}\otimes \id_{\ch_R} = \tilde{P}_i  = \lambda MM^{\dagger}&=\lambda(\ketbra{i}{j}\otimes B_{ij})(\ketbra{j}{i}\otimes B_{ij}^{\dagger})\\
&=\lambda\ketbra{i}{i}\otimes B_{ij}B_{ij}^{\dagger}
\end{align*}
and hence that $\lambda B_{ij}B_{ij}^{\dagger} = \id_{\ch_R}$. The result for $B_{ij}^{\dagger}B_{ij}$ is proved symetrically. \vspace{1mm}\\

\textsc{Sixth}, by Eq. (\ref{neq0}), one can find for each $i$ an element $A^{(i)} \in \ca$ such that $P_1 A^{(i)} P_i \neq 0$ and which can be rescaled so that Eq. (\ref{localunitary}) on $N^{(i)} = V A^{(i)} V^{\dagger}$ makes $N^{(i)}_{1i}$ unitary. We call $A = \sum_i P_1 A^{(i)}P_i \in \ca$ and $B = V A V^{\dagger}$ which is then such that for all $i$, $B_{1i} = N^{(i)}_{1i}.$ We now define the unitary 
\begin{align*}
W&=\sum_i(\ketbra{i}{i}\otimes B_{1i})\\
&=\sum_i (\ketbra{i}{1} \otimes \id_{\ch_R})(\ketbra{1}{i}\otimes B_{1i})\\
&=\sum_i (\ketbra{i}{1} \otimes \id_{\ch_R}) \tilde{P_1} B \tilde{P}_i ,
\end{align*}
which has the property that for all $j$,
\begin{align*}
W \tilde{P_j} W^{\dagger} &= \sum_{i,l} (\ketbra{i}{1} \otimes \id_{\ch_R}) \tilde{P_1} B \tilde{P}_i \tp_j \tp_l B^{\dagger} \tp_1 (\ketbra{1}{l} \otimes \id_{\ch_R}) \\
& = (\ketbra{j}{1} \otimes \id_{\ch_R}) \tilde{P_1} B \tp_j  B^{\dagger} \tp_1 \ketbra{1}{j} \otimes \id_{\ch_R}) \\
& = (\ketbra{j}{1} \otimes \id_{\ch_R}) B_{1j} B_{1j}^{\dagger} \ketbra{1}{j} \otimes \id_{\ch_R}) \\
& = (\ketbra{j}{j} \otimes \id_{\ch_R}) \\
& = \tp_j.
\end{align*}

\textsc{Finally}, we define the unitary $U = WV : \ch \rightarrow \ch_L \otimes \ch_R$ and we show that, for all $A \in \ca$ and for all $i,j$, there exists $\lambda \in \mathbb{C}$ such that 
\begin{align}
 \tilde{P}_i (U A U^{\dagger}) \tilde{P}_j = \lambda \ketbra{i}{j}\otimes \id_{\ch_R};
\end{align}
which will conclude the proof, as $ U A U^{\dagger} = \sum_{i,j}  \tilde{P}_i ( U A U^{\dagger}) \tilde{P}_j$, and for any $i,j$ one can find $A \in \ca$ such that  $\tilde{P}_i  (U A U^{\dagger}) \tilde{P}_j  = U P_i A P_j U^{\dagger}$ is non-zero (by Eq. (\ref{neq0})). And indeed this equality is true as
\begin{align*}
& \tilde{P}_i  U A U^{\dagger} \tilde{P}_j \\
=& \tilde{P}_i W V  A V^{\dagger} W^{\dagger} \tilde{P}_j \\
=&  W \tp_i V A V^{\dagger} \tp_j W^{\dagger} \\
=&  \left( \sum_{k} (\ketbra{k}{1} \otimes \id_{\ch_R}) \tilde{P}_1 B \tilde{P}_k \right) \tp_i U A U^{\dagger} \tp_j \left( \sum_l \tp_l B^{\dagger} \tp_1 (\ketbra{1}{l} \otimes \id_{\ch_R}) \right) \\
=& (\ketbra{i}{1} \otimes \id_{\ch_R}) \tilde{P_1} B  \tp_i U A U^{\dagger} \tp_j B^{\dagger} \tp_1 (\ketbra{1}{j} \otimes \id_{\ch_R})\\
=& (\ketbra{i}{1} \otimes \id_{\ch_R}) \lambda \tp_1 (\ketbra{1}{j} \otimes \id_{\ch_R}) \\
=&  \lambda (\ketbra{i}{1}\otimes\id_{\ch_R}) (\ketbra{1}{1}\otimes \id_{\ch_R}) (\ketbra{1}{j}\otimes \id_{\ch_R}) \\
=& \lambda \ketbra{i}{j}\otimes\id_{\ch_R}.
\end{align*}

\end{proof}

\bibliographystyle{plain}
\bibliography{refs}

\end{document}